\newcommand{\vertiii}[1]{{\left\vert\kern-0.25ex\left\vert\kern-0.25ex\left\vert #1
    \right\vert\kern-0.25ex\right\vert\kern-0.25ex\right\vert}}
\renewcommand*\subjclass[2][2000]{%
  \def\@subjclass{#2}%
  \@ifundefined{subjclassname@#1}{%
    \ClassWarning{\@classname}{Unknown edition (#1) of Mathematics
      Subject Classification; using '1991'.}%
  }{%
    \@xp\let\@xp\subjclassname\csname subjclassname@#1\endcsname
  }%
}
\newtheorem{theorem}{Theorem}[section]
\newtheorem{lemma}[theorem]{Lemma}
\newtheorem*{lemma*}{Lemma}
\newtheorem{proposition}[theorem]{Proposition}
\def\IB{{\Bbb B}}
\def\1ton{1,2,\ldots,n}
\def\det{{\rm det}}
\newcommand{\R}{\mathbb{R}}
\newcommand{\B}{\mathbb{B}}
\theoremstyle{definition}
\newtheorem{definition}[theorem]{Definition}
\theoremstyle{remark}
\newtheorem{remark}[theorem]{Remark}
\numberwithin{equation}{section}
\newcommand{\C}{\mathbb{C}}
\renewcommand{\imath}{i} %??? You already used $i$ for the imaginary unit (after (1.2)), which is the standard practice in math, I think -- I have not seen such usage of $\imath$ elsewhere. So, I think, if you insist on such usage of $\imath$, it should consitent throughout and also explained
\def\XXint#1#2#3{{\setbox0=\hbox{$#1{#2#3}{\int}$}
\vcenter{\hbox{$#2#3$}}\kern-.5\wd0}}
\def\ge{\geqslant}
\begin{document}

\title[A Faber-Krahn inequality for log-subharmonic functions in the ball]{A Faber-Krahn type inequality for log-subharmonic functions in the hyperbolic ball}

%\date{11 October, 2005}

\keywords{Hyperbolic harmonic functions, isoperimetric inequality, Bergman spaces}

\author{David Kalaj}
\address{University of Montenegro, Faculty of Natural Sciences and
Mathematics, Cetinjski put b.b. 81000 Podgorica, Montenegro}
\email{davidk@ucg.ac.me}

\author{Jo\~ao P. G. Ramos}
\address{ETH Z\"urich, D-MATH, R\"amistrasse 101, 8092 Z\"urich, Switzerland}
\email{joao.ramos@math.ethz.ch}

\begin{abstract}
Assume that $\Delta_h$ is the hyperbolic Laplacian in the unit ball $\mathbb{B}$ and assume that $\Phi_n$ is the unique radial solution of Poisson equation $\Delta_h \log \Phi_n =-4 (n-1)^2$ satisfying the condition $\Phi_n(0)=1$ and $\Phi_n(\zeta)=0$ for $\zeta\in \partial\B$. We explicitly solve the question of maximizing
$$
R_n(f,\Omega)=
\frac{\int_\Omega  |f(x)|^2 \Phi_n^\alpha(|x|) \, d\tau(x)}{\|f\|^2_{\mathbf{B}^2_\alpha}},
$$
over all $f \in\mathbf{B}^2_\alpha$ and $\Omega \subset \mathbb{B}$ with $\tau(\Omega) = s,$ where $d\tau$ denotes the invariant measure on $\B,$ and $\|f\|_{{B}^2_\alpha}^2 = \int_\mathbb{B} |f(x)|^2 \Phi_n^\alpha(|x|) d\tau(x) < \infty.$

This result extends the main result of Tilli and the second author \cite{ramostilli} to a higher-dimensional context. Our proof relies on a version of the techniques used for the two-dimensional case, with several additional technical difficulties arising from the definition of the weights $\Phi_n$ through hypergeometric functions.  Additionally, we show that an immediate relationship between a concentration result for log-sunharmonic functions and one for the Wavelet transform is only available in dimension one.
\end{abstract}
\maketitle
\tableofcontents
\sloppy

\maketitle
\section{Introduction}
Let $\mathbb{D}$ be the unit disk, $\alpha>1$ and let $\mathcal{A}_\alpha$ be the Bergman space of holomorphic functions defined on the unit disk so  that $\|f\|_{2,\alpha}^2:=\int_{\mathbb{D}}  |f(z)|^2 (1-|z|^2)^\alpha \, d\tau(z)< +\infty$. A fundamental question, raised by L. D. Abreu and M. D\"orfler in \cite{AbreuDoerfler} in connection to the optimal concentration for Wavelet transforms, is: for a fixed domain $\Omega \subset \mathbb{D},$ what is the maximum value that the quantity
$$\int_\Omega  |f(x)|^2 (1-|z|^2)^\alpha \, d\tau(z)$$
can achieve, where $d\tau=2(1-|z|^2)^{-2}dxdy$, $\tau(\Omega)=s$ and $\|f\|_{2,\alpha}=1$? Using a suitable version of techniques from \cite{3}, adapted and expanded to the hyperbolic case, the second author and P. Tilli \cite{ramostilli} were able to solve this problem \emph{exactly} for $\alpha > 1,$ thus solving the original question of characterizing sets of optimal concentration for certain special families of wavelet transforms.

Using a suitable version of techniques from \cite{3}, adapted and expanded to the hyperbolic case, the second author and P. Tilli \cite{ramostilli} were able to solve this problem \emph{exactly} for $\alpha > 1,$ thus solving the original question of characterizing sets of optimal concentration for certain special families of wavelet transforms. The techniques from \cite{3} were also used by A. Kulikov in \cite{kulik} to prove a  Wehrl-type entropy conjecture (see, for instance, \cite{9}) on the $SU(1,1)$ group, which is equivalent to a conjecture of Pavlovi\'c \cite{13} and a conjecture of O. Brevig, J. Ortega-Cerd\`a, K. Seip, and J. Zhao \cite{7} concerning certain embedding estimates for analytic functions. The fist author in \cite{kalaj} extended the result of A. Kulikov \cite{kulik}  to the higher-dimensional setting. For an extension of results from \cite{3, kulik}  we also refer to the recent paper by R. Frank \cite{rupert}. In the same spirit of those results, we finally also point to a recent partial solution to a contraction conjecture \cite{Jason} by P. Melentijevi\'c \cite{petar}, using the same circle of ideas. 

In this note, we extend the result of second author and P. Tilli \cite{ramostilli}  to the higher-dimensional setting. In order to do so, we first discuss the basic setup of the problem in the following section, introducing the relevant notions of operators and admissible spaces in the higher-dimensional context. In the third section, we prove our main result, by tailoring the general outline by the second author and P. Tilli \cite{ramostilli} to the case at hand; and in the last section, we briefly show that, although the class of special windows for which the Wavelet transform becomes analytic is non-empty in dimension one, in higher dimensions, when one restricts to \emph{radial} windows, one cannot simultaneously have hyperbolic harmonicity and log-subharmonicity, hinting at the fact that that case is fundamentally different, and must, as such, depart from analytic methods.

\section{Preliminaries}
\subsection{Harmonic maps}
We start out by recalling basic facts about Harmonic maps. The harmonic map equations for $u=(u^1,\dots
u^n):\mathcal N\to \mathcal M$ from the Riemann manifold $\mathcal
N=(B^n,(h_{jk})_{j,k})$ into a Riemann manifold $\mathcal
M=(\Omega,(g_{jk})_{j,k})$ (where $\Omega\subset \Bbb R^n$) are
\begin{equation}\label{cron} |h|^{-1/2}\sum_{\alpha,\beta=1}^n\partial_\alpha(|h|^{1/2}
h^{\alpha \beta}\partial_\beta u^i) + \sum_{\alpha, \beta, k, \ell =
1}^n\Gamma^i_{k\ell}(u)D_\alpha u^k D_\beta u^\ell = 0, \,\, i =
1,\dots,n,\end{equation} where $\Gamma^{i}_{k\ell}$ are Christoffel
Symbols of the metric tensor $(g_{jk})_{j,k}$ in the target space
$\mathcal M$:
$$    \Gamma^{i}_{k\ell}=\frac{1}{2}g^{im} \left(\frac{\partial
g_{mk}}{\partial x^\ell} +
    \frac{\partial g_{m\ell}}{\partial x^k} - \frac{\partial g_{k\ell}}{\partial x^m} \right) =
     {1 \over 2} g^{im} (g_{mk,\ell} + g_{m\ell,k} - g_{k\ell,m}),$$
the matrix $(g^{jk})_{j,k}$ (resp., $(h^{jk})_{j,k}$) is an inverse of the metric
tensor $(g_{jk})_{j,k}$ (resp. $(h_{jk})_{j,k}$), and $|h|=\det(h_{jk})_{j,k}$. See
e.g. \cite{jost} for further details.

Here and in the sequel we shall denote $|x|:=\sqrt{\sum_{k=1}^n x_k^2}$. We denote by $\mathcal{M}$ the group of M\"obius transformations of the unit ball onto itself. The M\"{o}bius invariant hyperbolic metric on the unit ball $\mathbb{B}$ is then defined by \begin{equation}\label{hm}h_{ij}(x)=\left\{
                                                                                                 \begin{array}{ll}
                                                                                                   \frac{4}{(1-|x|^2)^2}, & \hbox{for $i=j$ ;} \\
                                                                                                   0, & \hbox{for $i\ne j$.}
                                                                                                 \end{array}
                                                                                               \right.\end{equation}
A mapping $u\in C^{2}(\mathbb{B}^{n}, \mathbb{C})$ or more generally $u\in C^{2}(\mathbb{B}^{n}, \mathbb{R}^k)$ is said to be {\it hyperbolic harmonic} if $u$ (see Rudin \cite{rudin} and Stoll \cite{stoll} ) satisfies the hyperbolic Laplace equation

\begin{equation}\label{hyphar}
\Delta_{h}u(x)=(1-|x|^2)^2\Delta u(x)+2(n-2)(1-|x|^2)\sum_{i=1}^{n} x_{i} \frac{\partial u}{\partial x_{i}}(x)=0,
\end{equation}
 where
 $\Delta$ denotes the usual Laplacian in $\mathbb{R}^{n}$. We call $\Delta_{h}$ the {\it hyperbolic Laplacian operator}. This equation can be derived from equation \eqref{cron} above, if we consider $u$ as a mapping between the hyperbolic ball $(\mathbb{B}, (h_{ij})_{i,j})$ and the Euclidean space $\mathbb{C}$ or $\mathbb{R}^k$ with the flat metric  \begin{equation}\label{em}g_{ij}(x)=\left\{
                                                                                                 \begin{array}{ll}
                                                                                                 1, & \hbox{for $i=j$ ;} \\
                                                                                                   0, & \hbox{for $i\ne j$.}
                                                                                                 \end{array}
                                                                                               \right.\end{equation}
For $x\in \mathbb{B}$ we define the area element of the unit ball by
\begin{equation}\label{eq:invariant-measure-hyperbolic}
	d\tau(x) = \frac{2^n}{(1-|x|^2)^n} \ dV(x).
	\end{equation}
We will also sometimes denote hyperbolic measure of the set $A$ by $|A|_h= \tau(A)$.
The Poisson kernel for $\Delta_h$ is defined by $$P_h(x,\zeta)=\frac{(1-|x|^2)^{n-1}}{|x-\zeta|^{2n-2}}, \ \ (x,\zeta)\in \B\times \mathbb{S}.$$
Then for fixed $\zeta$, $x\to P_h(x,\zeta)$ is $\mathcal{M}-$harmonic, and, given a map $f\in \mathcal{L}(\mathbb{S})$, the function $$u(x) = P_h[f](x):=\int_{\mathbb{S}}P_h(x,\zeta) f(\zeta) d\sigma(\zeta)$$ is the Poisson extension of $f:$ it is $\mathcal{M}-$harmonic in $\B$, and agrees with $f$ on $\mathbb{S}.$

 We say that a smooth, real function $u$ is $\mathcal{M}-$subharmonic if $\Delta_h u(x) \ge 0$\footnote{This definition can be extended to the case of upper semicontinuous functions, by using the so-called invariant mean value property \cite{stoll}}.
 Notice further that
 \begin{equation}\label{fol}(\Delta_h u)(m(x)) =\Delta_h (u\circ m)(x),
 \end{equation}
 for every M\"obius transformation $m\in\mathcal{M}$ of the unit ball onto itself. For this reason, we shall also call hyperbolic harmonic (resp. hyperbolic subharmonic) functions $\mathcal{M}-$harmonic (resp. $\mathcal{M}-$subharmonic) functions. Note that, for $n=2$, the $\mathcal{M}-$harmonic and $\mathcal{M}-$subharmonic functions coincide with the usual Euclidean harmonic and subharmonic functions.

If $f$ is $\mathcal{M}-$subharmonic, then we have  the following Riesz decomposition theorem of Stoll \cite[Theorem~9.1.3]{stoll}:  $$f(x) = F_f(x)- \int_{\B} G_h(x,y) d\mu_f(y),$$ provided that $f\in \mathcal{S}^1$, where  $F_f(x)$ is the least $\mathcal{M}-$harmonic majorant of $f$ and $\mu_f$ is the $\mathcal{M}-$ Riesz measure of $f$, and $G_h(x,y)$ is the Green function of $\Delta_h$. If $f\in \mathcal{S}^p$, where $p>1$, then $g(x)=F_f(x)=P_h[\hat f](x)$, where $\hat f$ is the boundary function of $f$ (\cite[Theorem~7.1.1]{stoll}).
It follows from the formula \eqref{fol}, by putting $u=\mathrm{Id}$, and $m\in\mathcal{M}$, that
$$\Delta_h m=2 (n-2)(1-|m|^2)m.$$
Thus, M\"obius transformations are (considered as vectorial functions) hyperbolic harmonic only in the case $n=2$.
 %By putting $u(x) = g(|x|)$ and inserting in \eqref{hyphar} we arrive to the equation $$\Delta_h u= \left(1-r^2\right) \left(\frac{\left(2 (-2+n) r^2+(n-1) \left(1-r^2\right)\right) g'(r)}{r}+\left(1-r^2\right) g''(r)\right), $$ where $r=|x|$.
 \subsection{Appropriate weights} In order to construct the weights used in our definition of Bergman spaces, we need to define a couple of preliminary concepts. Thus, we define the hypergeometric functions $F$, as satisfying
  $$F\left[\begin{array}{c}
                                           a,b,c \\
                                           u,v
                                         \end{array};t\right]:=\sum_{n=0}^{\infty}\frac{(a)_{n}(b)_{n}(c)_n}{n!(u)_n (v)_n}t^{n},\enspace \mbox{for}\enspace|t|<1,$$ and
                                         $$F\left[\begin{array}{c}
                                           a,b,c
                                         \end{array};t\right]:=\sum_{n=0}^{\infty}\frac{(a)_{n}(b)_{n}}{n!(c)_n }t^{n},\enspace \mbox{for}\enspace|t|<1,$$
   and by analytic continuation elsewhere. Here $(a)_n$ denotes { the} rising factorial, i.e., $(a)_{n}=a(a+1)...(a+n-1)$, where $a \in \R$ is arbitrary.

Then one solution to the equation $\Delta_h \log v= -4(n-1)^2$ is given by
$$v=\Phi_n(r)=\exp\left\{\frac{(n-1) (2-n) r^2}{ n} F\left[\begin{array}{c}
                                           1,1,2-\frac{n}{2} \\
                                           2,1+\frac{n}{2}
                                         \end{array}; r^2\right]\right\} \left(1-r^2\right)^{n-1}.$$
                                         We will sometimes write $\Phi_n(x)$ instead of $\Phi_n(|x|)$.
                                         Note that $\Phi_n(r)\le \left(1-r^2\right)^{n-1}$, with strict inequality for $n>2$ and $r>0$. If $n=2$, then $\Phi_n(|x|) = (1-|x|^2)$ and this coincides with the case treated in \cite{ramostilli}. Observe, moreover, that $E_n (1-r^2)^{n-1} < \Phi_n(r),$ where $E_n =\exp\left\{\frac{(n-1) (2-n)}{ n} F\left[\begin{array}{c}
                                         	1,1,2-\frac{n}{2} \\
                                         	2,1+\frac{n}{2}
                                         \end{array}; 1 \right]\right\}.$ This shows that the Bergman norm induces by $\Phi_n$ is equivalent to that induced by the weight $(1-r^2)^{n-1}.$

                                      Finally, note that, for $n=3,4$ we have explicit formulas for $\Phi_n$:
                                         $$\Phi_3(r)=e^2 \left(\frac{1-r}{1+r}\right)^{\frac{1+r^2}{r}},  \ \ \ \Phi_4(r)=e^{-\frac{3}{2} r^2} (1 - r^2)^3.$$

%Note that
%$$\Delta_h (1-|x|^2)^{n-1}=-2 (n-1) n \left(1-|x|^2\right)^n$$

%$$\|f\|^p_{p,\alpha}=\int_{\B} |f|^p \Phi_n^\alpha (1-|x|^2)^{-n}dx $$

\subsection{Admissible monoid}\label{ssec:admissible}
We define $\mathfrak{E}$ to be the set of real-analytic  functions $g$ in $\mathbb{B}$  so that
$f:=\log |g|$ is $\mathcal{M}-$subharmonic.  Let  $\mathfrak{E}_+=\{f\in \mathfrak{E}: f>0\}$. Observe that $\mathfrak{E}$ is a monoid where the operation is simply the multiplication of two functions.  Observe that $1=e^0$, so $1\in \mathfrak{E}$. This monoid also contains the Abelian group $\mathcal{G}=\{e^f: \Delta_hf=0\}$.
Then for $a,b\in\mathfrak{E}_+$, $c, d\in \mathfrak{E}$,  $p\ge 0$ and $\alpha, \beta>0$ we have
\begin{enumerate}
	\item $a\cdot b \in\mathfrak{E}_+$, $c\cdot d\in \mathfrak{E}$,
	\item $a^p\in \mathfrak{E}_+$, $c^p\in \mathfrak{E}$,
	\item $\alpha a +\beta b \in \mathfrak{E}_+$.
	\item $\exp(a)\in \mathfrak{E}_+, \ \ \exp(c)\in \mathfrak{E}_+$.
	\item\label{prop:subh} $f \in \mathfrak{E} \Rightarrow |f|$ is $\mathcal{M}-$subharmonic. 
\end{enumerate}

In other words, $\mathfrak{E}_+$ is a convex cone and at the same time a monoid. We refer the reader to the monograph by M. Stoll \cite{stoll} for more details (see also \cite{kalaj}). 

% \item the last $\mathcal{M}-$harmonic majorant of $f$, $F_f$ coincides with the $\mathcal{M}-$Poisson extension of $f|_{\mathbb{S}}$ to the unit ball $\mathcal{B}$.
% \item every $g$ can be approximated by a sequence of continuous functions $g_k$ defined in $\overline{\mathbb{B}}$ uniformly in $\mathbb{B}$ and in the norm  $\|\cdot \|_{{B}^p_\alpha}^p$ so that $g_k$ satisfies (1) and (2) for every $k$.
 %     \end{enumerate}
 %Note that the condition (2) and (3) of the previous definitions are redundant, provided that $n=2$.

 %In that case we can define $g_n(z) = g(\frac{n}{n+1} z)$. This construction is not suitable for $n>2$, because the dilatation of a $\mathcal{M}-$harmonic (or a subharmonic) function is not  $\mathcal{M}-$harmonic (or a subharmonic) in general.

\begin{definition}
For $0 < p < \infty$ and $\alpha > 1$ we say that a smooth function $f$  in $\mathbb{B}$ belongs to the $\mathcal{M}-$Bergman space ${B}^p_\alpha$ if
$$\|f\|_{{B}^p_\alpha}^p = c(\alpha)\int_\mathbb{B} |f(x)|^p \Phi_n^\alpha(|x|) d\tau(x) < \infty,$$ where
$$\frac{1}{c(\alpha)}=\left( \int_{\B} \Phi^\alpha_n(x)(1-|x|^2)^{-n}\frac{dV(x)}{{\omega_n}}\right).$$
\end{definition}
\begin{definition}
 For $0 < p < \infty$ and $\alpha > 1$ we define
the $\mathcal{M}-$Bergman monoid $\mathbf{B}^p_\alpha$ consisting of functions $f$ in ${B}^p_\alpha\cap \mathfrak{E}$ having harmonic majorant $F_f\in {B}^p_\alpha$. % satisfying the additional condition that $|f(x)|^p \Phi_n(x)^\alpha\to 0$ uniformly when $|x|\to 1$.
\end{definition}
%\begin{remark}
% Note that, those "additional conditions" are redundant for bounded functions. They  are also redundant for $n=2$ in the weigh-Bergman and Hardy space of Holomorphic functions as it was observed by Kulikov in \cite{kulik}.  The proofs are given in Proposition~\ref{propo1} and Proposition~\ref{propozicija} below. On the other hand, "additional conditions" are redundant for $n=2$ and log-subharmonic functions as well. Namely, if $f$ is log-subharmonic, then it coincides on the boundary with the  modulus of a holomorphic function $h$, almost everywhere. This implies that those "additional conditions" are also redundant in this case, at least for $p>1$ (Proposition~\ref{propo1} and Proposition~\ref{propozicija}).  We will not use this claim in the proof of the Corollary~\ref{merkur}, but we will use the approximation method by using the so-called dilatations, which are suitable for the plane, but not for the space.
%\end{remark}
Observe that for $f(x) \equiv 1$ we have $ \|f\|_{{\mathbf{B}^q_\alpha}} = 1$ for all  $p>0$ and $\alpha>1$. Henceforth we will use the notation $\|f\|_{\mathbf{B}^p_{\alpha}}$ interchangeably with $\|f\|_{B^p_{\alpha}}$ whenever $f \in \mathbf{B}^p_{\alpha}.$ 

\subsection{M\"{o}bius invariance of $\mathbf{B}^2_\alpha$ }
It is important to mention that the M\"{o}bius group acts not only on the measure $\tau$ but on the spaces $\mathbf{B}^2_\alpha$ as well. More precisely, given a function $f\in \mathbf{B}^2_\alpha$ and $m\in \mathcal{M}$, the function
\begin{equation}\label{prety}g(x) = f\left(m(x)\right)\frac{\Phi_n^{\alpha/2}(|m(x)|)}{\Phi_n^{\alpha/2}(|x|)}\end{equation}
also belongs to the space $\mathbf{B}^2_\alpha$ and moreover it has the same norm as $f$  with respect to the measure $\tau$. We thus only need to check that $\Delta_h \log |g|\ge 0$, if $\Delta_h \log |f|\ge 0$, and this follows from the formula \eqref{fol} and straightforward calculations:
\[\begin{split}\Delta_h\log |g(x)|&=\Delta_h \log(|f\left(m(x)\right)|)+\Delta_h \log {\Phi_n^{\frac{\alpha}{2}}(|m(x)|)} - \Delta_h \log {\Phi_n^{\frac{\alpha}{2}}(|x|)}
	\\&\ge 0 + (4(n-1)^2-4(n-1)^2)\frac{\alpha}{2}= 0.\end{split}\]

Finally, we also point out the well-known formula for the Jacobian of M\"obius transformations of the unit ball onto itself $$J_m(x)=\frac{(1-|m(x)|^2)^n}{(1-|x|^2)^n}.$$ See e.g. \cite[p.~vii]{stoll}.

%Then
%$$g(s) = s \exp\left[-\int_{\mu(s)}^{+\infty}{\gamma}{\Upsilon(x) dx}\right],$$ where $\gamma=2^n \beta (n-1)^2$, is decreasing and constant if $v\equiv 1$.

%Here $$\mu(t)=|\{x: v^a(x) \Phi^\beta_n(x)\ge t\}|_h,$$ $a>0,\beta>0$  and $v$ is $\mathcal{M}-$log-subharmonic function and
%$$\Delta_h \log \Phi_n(x) = (n-1)^2.$$

\section{Main theorem}

With all the relevant notions already having been introduced, our goal is to maximize  the following functional:
\begin{equation}\label{newco}
R_n(f,\Omega)=
\frac{c(\alpha) \int_\Omega  |f(x)|^2 \Phi_n^\alpha(|x|) \, d\tau(x)}{\|f\|^2_{\mathbf{B}^2_\alpha}}
\end{equation}
over all $f \in\mathbf{B}^2_\alpha$ and $\Omega \subset \mathbb{B}$ with $\tau(\Omega) = s.$ The main result of this paper extends the corresponding result \cite{ramostilli} to the higherdimensional space. We prove

\begin{theorem}\label{thm:main} Let $\alpha>1,$ and $s>0$ be fixed. Among all functions $f\in \mathbf{B}^2_\alpha$ and among
all measurable sets $\Omega\subset  \mathbb{B}$ such that $\tau(\Omega)=s$, the quotient $R_n(f,\Omega)$ as defined in \eqref{newco} satisfies the inequality
\begin{equation}\label{upper}
	R_n(f,\Omega) \le R_n(1, \mathbb{B}_s),
\end{equation}
where $ \mathbb{B}_s$ is a ball centered at the origin with $\tau( \mathbb{B}_s) = s.$ Moreover, there is equality in \eqref{upper} if and only if $f$ belongs to the extremal set  and $\Omega$ is a ball such that $\tau(\Omega)=s$. The extremal set consists of functions  $g(x) = \frac{\Phi_n^{\alpha/2}(|m(x)|)}{\Phi_n^{\alpha/2}(|x|)}$, where $m\in\mathcal{M}$ is a M\"obius transformation of the unit ball onto itself.
\end{theorem}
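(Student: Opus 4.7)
\medskip

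\noindent\textbf{Proof plan.} The strategy adapts the blueprint of \cite{ramostilli} to the higher-dimensional setting, where the weight $\Phi_n$ is defined only implicitly through hypergeometric functions.

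\emph{Step 1 (Reduction to a rearrangement inequality).} Normalise $\|f\|_{\mathbf{B}^2_\alpha}=1$ and set $F(x):=|f(x)|^2\Phi_n^\alpha(|x|)$. By the bathtub principle, the supremum of $\int_\Omega F\,d\tau$ subject to $\tau(\Omega)=s$ is attained on a superlevel set $\{F>F^*(s)\}$, where $F^*$ denotes the decreasing rearrangement of $F$ with respect to $d\tau$. Since $\Phi_n^\alpha$ is radially decreasing, the analogous quantity for $f\equiv 1$ coincides with $\int_{\mathbb{B}_s}\Phi_n^\alpha\,d\tau$. The theorem then reduces to proving
\[
u_f(s):=\int_0^s F^*(\sigma)\,d\sigma\;\le\;u_\star(s):=\int_{\mathbb{B}_s}\Phi_n^\alpha\,d\tau\qquad\text{for all } s\in(0,|\mathbb{B}|_h),
\]
with equality characterising the claimed extremals.

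\emph{Step 2 (Differential comparison).} One has $u_f(0)=u_\star(0)=0$ and $u_f(|\mathbb{B}|_h)=u_\star(|\mathbb{B}|_h)=1/c(\alpha)$, so it suffices to show that $u_\star-u_f\ge 0$ throughout $(0,|\mathbb{B}|_h)$. Following the scheme of \cite{ramostilli}, the plan is to derive a differential inequality for $u_f$ resting on three ingredients: (a) the Möbius invariance \eqref{prety}, which via a well-chosen $m\in\mathcal{M}$ allows the centroid of the relevant superlevel set to be placed at the origin after replacing $f$ by an equally normalised element of $\mathbf{B}^2_\alpha$; (b) a weighted sub-mean value inequality for $|f|^2$ against $\Phi_n^\alpha\,d\tau$ on hyperbolic balls centred at $0$, extracted from the log-subharmonicity $\Delta_h\log|f|\ge 0$ together with the defining identity $\Delta_h\log\Phi_n^\alpha=-4\alpha(n-1)^2$; (c) monotonicity estimates on $s\mapsto\int_{\mathbb{B}_s}\Phi_n^\alpha\,d\tau$ drawn from the hypergeometric representation of $\Phi_n$. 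Integrating the resulting inequality between the endpoint values yields $u_f\le u_\star$.

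\emph{Step 3 (Equality case and main obstacle).} Equality would force $\Omega$ to be an exact superlevel set of $F$ and the sub-mean value inequality of Step~2 to be tight; combining this with \eqref{prety} identifies $f$ as a member of the extremal family $\{x\mapsto\Phi_n^{\alpha/2}(|m(x)|)/\Phi_n^{\alpha/2}(|x|)\}_{m\in\mathcal{M}}$ and $\Omega$ as the Möbius image of some $\mathbb{B}_s$, i.e.\ an arbitrary hyperbolic ball of measure $s$. The principal obstacle lies in parts (b)-(c) of Step~2: in the two-dimensional case of \cite{ramostilli} the explicit formula $\Phi_2(r)=1-r^2$ permits direct computation of all radial integrals, whereas for $n\ge 3$ the hypergeometric presentation of $\Phi_n$ renders them implicit. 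The argument must therefore proceed from the PDE $\Delta_h\log\Phi_n=-4(n-1)^2$ and its radial ODE reduction, together with the qualitative monotonicity features of $\Phi_n$ noted after its definition, rather than from closed-form expressions.
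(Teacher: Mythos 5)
Your Step 1 matches the paper's setup (normalise, pass to superlevel sets, and aim to compare $I_n(s)=\int_0^s F^*(\sigma)\,d\sigma$ with the model $\theta(s)=R_n(1,\mathbb{B}_s)$), but Step 2 — the heart of the argument — is not a proof, and the ingredients you list are not the ones that make the scheme of \cite{ramostilli} work, nor would they work as stated. The actual mechanism is: (i) the coarea-type lemma giving $I_n'(s)=u^*(s)$ and a formula for $I_n''(s)$ in terms of $\int_{A_s}|\nabla u|^{-1}$; (ii) Cauchy--Schwarz plus the Gauss divergence theorem applied to $\nabla\log u$, where $\Delta_h\log u\ge-4\alpha(n-1)^2$ converts the boundary integral into $\gamma s\,I_n'(s)$; and (iii) Schmidt's hyperbolic isoperimetric inequality applied to the superlevel set $\Omega_s$, yielding $I_n''(s)\ge-\gamma I_n'(s)\Upsilon(s)$. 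You never invoke the isoperimetric inequality at all; instead you propose Möbius recentering of ``the relevant superlevel set'' followed by a weighted sub-mean value inequality on balls centred at the origin. This cannot replace (iii): the superlevel sets of $F$ are arbitrary sets, not balls, there is no single Möbius map recentring all of them simultaneously (a different level would require a different recentering, changing $f$ each time), and a sub-mean value inequality at the origin only controls pointwise values (a Kulikov-type statement), not the growth of $s\mapsto I_n(s)$ over the whole range of $s$, which is what the Faber--Krahn statement requires. Likewise, ``integrating the resulting inequality between the endpoint values'' skips the actual comparison device: one must verify the exact identity $\theta''(s)/\theta'(s)=-\gamma\Upsilon(s)$ and then use convexity of $I_n\circ\theta^{-1}$ together with $J(0)=0$, $J(1)\le1$ to conclude $I_n\le\theta$.

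The second genuine gap is the one you yourself flag as ``the principal obstacle'': for $n\ge3$ the verification that the model $\theta$ saturates the same ODE is precisely where the hypergeometric work lives (differentiating the volume formula, the Euler transformation $F[a,b,c,x]=(1-x)^{c-a-b}F[c-a,c-b,c,x]$, and a Pochhammer/Gamma identity to reduce $\Phi_n'/\Phi_n$ to a single hypergeometric term matching $\Upsilon$). Acknowledging the obstacle is not resolving it, so the core computation of the theorem is missing. Your Step 3 (equality case) is directionally right but depends on tightness in inequalities you have not established; in the paper it follows from equality in the isoperimetric inequality (forcing the level sets to be balls) together with $\Delta_h\log|f|=0$ inside, constancy of $\log|g|$ on the boundary after recentering, and the maximum principle for $\mathcal{M}$-subharmonic functions — the Möbius invariance \eqref{prety} is used only at this stage, not in the proof of the inequality itself.
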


It is worth noting that A. Kulikov, F. Nicola, J. Ortega-Cerd\`a and P. Tilli \cite{KNOT} have recently independently obtained a result (see Theorem 5.1 in their manuscript) which, when specified to the $n-$dimensional hyperbolic space endowed with the Laplace-Beltrami operator induced by the hyperbolic metric, gives an upper bound equivalent to that of Theorem \ref{thm:main} above. We opted, however, to keep a structure for the manuscript in which the proof of the main result highlights the intrinsic hyperbolic geometry of the problem, since our current proof yields, among others, an explicit form for the extremizers.

In order to prove Theorem~\ref{thm:main}, we need some ingredients present both in \cite{3} as well as in \cite{ramostilli}.  The first such ingredient is the isoperimetric inequality.

For a Borel set $\Omega\subset \mathbb{B}$ we recall the definition \eqref{eq:invariant-measure-hyperbolic} of hyperbolic volume   $$\tau(\Omega) =\int_{\Omega}\left(\frac{2}{1-|x|^2}\right)^n dx,$$ and define  the hyperbolic perimeter by
$$P(\Omega)=\int_{\partial \Omega}\left(\frac{2}{1-|x|^2}\right)^{n-1} d\mathcal{H}^{n-1}(x),$$ where $\mathcal{H}^{n-1}$ is the $n-1$ dimensional Hausdorff measure.
Assume that  $\IB(r)$ is a ball centered at zero and with Euclidean radius $r$. Then its hyperbolic volume is $$V_r=\tau(\mathbb{B}(r))=\frac{2^n \pi ^{n/2} F\left[\frac{n}{2},n,\frac{2+n}{2},r^2\right] r^n}{ \Gamma\left[1+\frac{n}{2}\right]},$$ and its hyperbolic perimeter is $$P_r=P(\mathbb{B}(r))=\frac{2 n \pi ^{n/2} r^{n-1} \left(1-r^2\right)^{1-n}}{\Gamma\left[1+\frac{n}{2}\right]}.$$

The isoperimetric property of hyperbolic ball was established by E. Schmidt \cite{14} see also \cite{cvpde, 15}. He proved that for every Borel set $\Omega\subset \mathbb{B}$ of finite perimeter $P(\Omega)$, so that  $\tau(\Omega)=\tau(\mathbb{B}(r))$ and $r>0$ the following inequality holds:
\begin{equation}\label{isophyper}
P_r\le P(\Omega),
\end{equation}
where $P_v$ is the perimeter of $\IB(v)$. Since $r\to v=V_r$ is increasing, we may define its inverse function $S(v)=r$.
Then we define the function $\Upsilon$ by  \begin{equation}\label{newper}\Upsilon(v)=\frac{v}{P^2_{S(v)}} \end{equation} and thus, \eqref{isophyper} may be written as
\begin{equation}\label{newper1}\frac{P(\Omega)^2}{\tau(\Omega)} \ge \frac{1}{\Upsilon(\tau(\Omega))},\end{equation} with equality in \eqref{newper1} if and only if $\Omega$ is a ball.

\begin{remark}
If $n=2$ then  $\Upsilon(V)=\frac{1}{4\pi + V}$. It seems unlikely that we can give an explicit expression for the function $\Upsilon$ for the higher-dimensional case, but, as we shall see, this does not interfere in our proof.
\end{remark}

Another fact we will need for the proof, also common in \cite{3} and \cite{ramostilli}, is that point evaluation functionals are also continuous in this case, as highlighted by the following proposition:

\begin{proposition}\label{propo1}
Let $p>1$ and $\alpha>1$. Then point evaluations are continuous functionals in $\mathbf{B}_\alpha ^p$.
More precisely there is a constant  $C=C(n,\alpha)$ so that, if $|f|$ belongs to the Bergman space $\mathbf{B}_\alpha^p$, then
\begin{equation}\label{seconda}|f(x)|^{p}\Phi_n^\alpha(x)\le C\|f\|^p_{\mathbf{B}_\alpha^p}.\end{equation}
Moreover, we also have $\lim_{|x|\to 1}|f(x)|^p(1-|x|^2)^\alpha=0$.
\end{proposition}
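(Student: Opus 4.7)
The plan is to combine M\"obius invariance with the invariant sub-mean value inequality for $\mathcal{M}$-subharmonic functions, in the spirit of standard point-evaluation bounds for weighted Bergman spaces. Fix $x \in \B$ and pick any $m \in \mathcal{M}$ with $m(0) = x$. In analogy with \eqref{prety}, I set
\[
g(y) := f(m(y)) \left(\frac{\Phi_n(|m(y)|)}{\Phi_n(|y|)}\right)^{\alpha/p}.
\]
Using \eqref{fol} and the defining equation $\Delta_h \log \Phi_n = -4(n-1)^2$, the same cancellation as in Subsection~2.4 yields $\Delta_h \log|g|(y) = (\Delta_h \log|f|)(m(y)) \ge 0$, so $g \in \mathfrak{E}$. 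The M\"obius invariance of $d\tau$ combined with the cancellation of the $\Phi_n$ factors then gives $\|g\|_{\mathbf{B}^p_\alpha} = \|f\|_{\mathbf{B}^p_\alpha}$, and since $\Phi_n(0) = 1$, one has $|g(0)|^p = |f(x)|^p \Phi_n^\alpha(|x|)$.

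The key analytic step is the sub-mean value inequality. Since $\log|g|$ is $\mathcal{M}$-subharmonic, so is $|g|^p = \exp(p \log|g|)$, by the identity $\Delta_h e^u = e^u\bigl(\Delta_h u + (1-|x|^2)^2|\nabla u|^2\bigr)$. At the origin this gives $|g(0)|^p \le \int_{\mathbb{S}} |g(r\zeta)|^p\,d\sigma(\zeta)$ for every $r \in (0,1)$. I would now multiply by the positive weight $w(r) = 2^n r^{n-1}(1-r^2)^{-n}\Phi_n^\alpha(r)$ and integrate on $(0, 1/2)$ (where $\Phi_n$ stays bounded away from $0$ and $w$ is integrable near $0$). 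Switching to polar coordinates, this yields
\[
|g(0)|^p \int_0^{1/2} w(r)\,dr \le \int_{|y|<1/2} |g(y)|^p \Phi_n^\alpha(|y|)\,d\tau(y) \le \frac{1}{c(\alpha)} \|g\|_{\mathbf{B}^p_\alpha}^p,
\]
which together with the previous step delivers \eqref{seconda} with a constant $C = C(n,\alpha)$.

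For the boundary vanishing statement, I would refine the previous estimate by replacing the integral over $\{|y|<1/2\}$ by one over a shrinking region: since $m$ is a hyperbolic isometry, $m(\{|y|<1/2\})$ is a hyperbolic ball of fixed hyperbolic radius around $x$, which is contained in $\{|y| > 1-\delta\}$ once $|x|$ is close enough to $1$. Given $\varepsilon > 0$, the absolute continuity of $|f|^p\Phi_n^\alpha\,d\tau$ provides $\delta > 0$ with $\int_{\{|y|>1-\delta\}} |f|^p \Phi_n^\alpha\,d\tau < \varepsilon$, and rerunning the estimate gives $|f(x)|^p \Phi_n^\alpha(|x|) \to 0$ as $|x| \to 1$; the stated form of the limit then follows from the two-sided comparison between $\Phi_n^\alpha$ and $(1-|x|^2)^{\alpha(n-1)}$ noted right after the definition of $\Phi_n$. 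The main technical hurdle is verifying that the translate $g$ actually belongs to $\mathbf{B}^p_\alpha$ rather than merely to $B^p_\alpha \cap \mathfrak{E}$; this reduces to observing that pre-composition with a M\"obius transformation combined with the bounded weight $(\Phi_n(|m|)/\Phi_n)^{\alpha/p}$ preserves the harmonic-majorant property, so that a harmonic majorant of $|g|$ in $B^p_\alpha$ is inherited from that of $|f|$ under the norm isometry established in the first paragraph.
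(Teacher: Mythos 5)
The paper itself gives no internal proof of Proposition \ref{propo1} (it defers to \cite{kalaj}), so there is nothing to compare against except the statement; your argument is the natural direct one, and its main part is correct. The M\"obius shift $g(y)=f(m(y))\bigl(\Phi_n(|m(y)|)/\Phi_n(|y|)\bigr)^{\alpha/p}$ has $\Delta_h\log|g|\ge 0$ by \eqref{fol} and $\Delta_h\log\Phi_n=-4(n-1)^2$, the same norm as $f$ by invariance of $d\tau$, and $|g(0)|^p=|f(x)|^p\Phi_n^\alpha(|x|)$; moreover $|g|^p$ is $\mathcal{M}$-subharmonic (this is exactly properties (2) and (5) of the monoid, which also take care of the zeros of $f$, where your smooth identity for $\Delta_h e^u$ does not literally apply), so the invariant sub-mean value inequality at the origin, integrated in $r\in(0,1/2)$ against $r^{n-1}(1-r^2)^{-n}\Phi_n^\alpha(r)$, gives \eqref{seconda} with $C=C(n,\alpha)$. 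The localization step, transporting $\{|y|<1/2\}$ by the isometry $m$ into $\{|y|>1-\delta\}$ and using continuity of the finite measure $|f|^p\Phi_n^\alpha\,d\tau$, correctly yields $|f(x)|^p\Phi_n^\alpha(|x|)\to 0$ as $|x|\to 1$.

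The genuine gap is the very last deduction. Since $\Phi_n^\alpha\asymp(1-|x|^2)^{\alpha(n-1)}$, what you have proved is $|f(x)|^p(1-|x|^2)^{\alpha(n-1)}\to 0$; because $1-|x|^2\le 1$ and $\alpha(n-1)\ge\alpha$, this is \emph{weaker} than the stated limit $|f(x)|^p(1-|x|^2)^{\alpha}\to 0$ when $n>2$ — the two-sided comparison runs in the wrong direction for this implication. Nor can the stated exponent follow from the hypotheses you actually use (finite norm plus log-$\mathcal{M}$-subharmonicity): for $f(x)=(1-|x|^2)^{-\beta}$ one has $\Delta_h\log\frac{1}{1-|x|^2}=2n+2(n-2)|x|^2\ge 0$, and $f\in B^p_\alpha\cap\mathfrak{E}$ whenever $p\beta<(n-1)(\alpha-1)$; if $n\ge 3$ and $\alpha>\frac{n-1}{n-2}$ one can choose $\alpha<p\beta<(n-1)(\alpha-1)$, so that $|f(x)|^p(1-|x|^2)^{\alpha}\to\infty$. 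Such an $f$ is excluded from $\mathbf{B}^p_\alpha$ only by the harmonic-majorant requirement, which your decay argument never invokes (your closing remark about $g\in\mathbf{B}^p_\alpha$ is not needed anywhere in the proof and does not repair this). So either state and prove the limit with the weight $\Phi_n^\alpha$ (equivalently $(1-|x|^2)^{\alpha(n-1)}$), which is all that the proof of Theorem \ref{thm:main} uses to ensure the super-level sets are compactly contained in $\B$, or, to obtain the exponent $\alpha$ as literally written, bring the majorant $F_f\in B^p_\alpha$ into the argument (e.g.\ via $|f|\le F_f=P_h[\hat F_f]$ and estimates on the $\mathcal{M}$-Poisson kernel); as written, that step is a gap.
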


For a proof, we refer the reader to \cite{kalaj}. We are now ready to prove our main result. 

\begin{proof}[Proof of Theorem \ref{thm:main}]
	
	Let $u(x) = |f(x)|^2 \Phi_n^\alpha(|x|)$ and assume that $$\|f\|^2_{\mathbf{B}^2_\alpha}=\int_{\mathbb{B}}|f(x)|^2 \Phi_n^\alpha(|x|)d\tau(x)=1.$$ Let $u^*(t)$ be defined as
\[
u^*(t) := \inf_{s > 0} \{s \ge 0 \colon \mu(s) \le t\},
\]
where $\mu(s) = \tau(\{ u > s\}).$ Let now $\Omega_s=\{x: u(x)>u^\ast(s)\}$. Then by Proposition~\ref{propo1}, $\Omega_t$ is strictly contained in $\mathbb{B}$. Now, we can already start with the necessary modifications to \cite{ramostilli}.

Indeed, for $n\ge 2$, define

$$I_n(s) =  \int_{\{x: u(x)> u^\ast(s)\}} u(x) d\tau(x).$$ Observe that $u^\ast(0)=\infty$ and $u^\ast(\infty)=0$. Thus $I_n(0)=0$ and $I_n(\infty)=\|f\|^2_{\mathbf{B}^2_\alpha}=1$. Notice also that
\[
\int_{\Omega} u(x) d\tau(x) \le I_n(s),
\]
whenever $\tau(\Omega) = s.$

%For exposition let us first consider the case $n=2$. Let  $u^\ast(s)$ be the unique value of $t$ so that $\tau\{x: u(x)>t\}=s$.

%Tilli and the second author proved that if $u(x) = |f(x)|^2 (1-|x|^2)^{\alpha}$, and $d\tau(x)=4(1-|x|^2)^{-2}dxdy$, then for $$I(s) = \int_{\{x: u(x)>u^\ast(s)\}} u(x)d\tau(x)$$ we have  \begin{equation}\label{Ipp}I''(s)  +\alpha I'(s)(4\pi+s)^{-1}\ge 0.\end{equation} Further  for $$\theta(s) = 1-(1+s/(4\pi))^{1-\alpha}$$ and $$T(t) = \theta^{-1}(t)=4\pi  \left(-1+(1-t)^{\frac{1}{1-\alpha}}\right)$$ we have $$J(t)=I(T(t))$$ is convex. In other words $$J''(t)\ge 0.$$

%Since $J(0)=0$ and $J(1)= 1$ we have $J(t)\le t$. So $$I(s)\le \theta(s).$$

We now state a lemma which allows us to explicitly compute derivatives of the distribution function $\mu,$ as well as that of the hyperbolic rearrangement $u^*.$ We highlight the fact that it can be proved in a similar way as in \cite[Lemma~3.2]{3}.

\begin{lemma}\label{thm:lemma-derivatives} The function $\mu$ is absolutely continuous on $(0,\max u],$ and
	\[
	-\mu'(t) = 2^n\int_{\{u = t\}} |\nabla u|^{-1} (1-|z|^2)^{-n} \, d \mathcal{H}^{n-1}.
	\]
	In particular, the function $u^*$ is, as the inverse of $\mu,$ locally absolutely continuous on $[0,+\infty),$ with
	\[
	-(u^*)'(s) = \left( 2^n\int_{\{u=u^*(s)\}} |\nabla u|^{-1} (1-|z|^2)^{-n} \, d \mathcal{H}^{n-1} \right)^{-1}.
	\]

\end{lemma}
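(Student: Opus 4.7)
The plan is to obtain an integral representation for $\mu(t)$ via the coarea formula and differentiate in $t$; the derivative of $u^*$ then follows by the chain rule applied to $\mu\circ u^* = \id$. To begin, I would record the regularity of $u(x)=|f(x)|^2\Phi_n^\alpha(|x|)$: since $f \in \mathfrak{E}$ is real-analytic and $\Phi_n$ is a positive real-analytic radial function, $u$ is real-analytic on $\mathbb{B}$. By Proposition~\ref{propo1} and the bound $\Phi_n \le (1-|x|^2)^{n-1} \le (1-|x|^2)$ valid for $n\ge 2$, the function $u$ is bounded with $u(x)\to 0$ as $|x|\to 1$; in particular $u$ extends continuously to $\overline{\mathbb{B}}$ and attains its maximum $M:=\max u$. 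Excluding the trivial case in which $u$ is constant, the critical set $\{|\nabla u|=0\}$ is a proper real-analytic subvariety of $\mathbb{B}$ and hence of Lebesgue measure zero, while by Sard's theorem the corresponding set of critical values is a null subset of $\mathbb{R}$.

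Next, rewriting the distribution function as
\[\mu(t)=\int_{\{u>t\}}\frac{2^n}{(1-|x|^2)^n}\,dV(x),\]
I would apply the coarea formula with the weight $g(x)=\tfrac{2^n}{(1-|x|^2)^n|\nabla u(x)|}$ on the regular set $\mathbb{B}\setminus\{|\nabla u|=0\}$. Using Sard's theorem to guarantee that for a.e. level $s$ the slice $\{u=s\}$ is a smooth $(n-1)$-dimensional submanifold and the critical set intersects it in an $\mathcal H^{n-1}$-null subset, this yields
\[\mu(t)=\int_t^{M}\left(\int_{\{u=s\}}\frac{2^n}{(1-|z|^2)^n|\nabla u(z)|}\,d\mathcal H^{n-1}(z)\right)ds,\]
from which the absolute continuity of $\mu$ on $(0,M]$ and the stated formula for $-\mu'(t)$ follow immediately by Lebesgue differentiation.

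The derivative of $u^*$ is then obtained from the identity $\mu(u^*(s))=s$: since $\mu$ is continuous and, by the formula just obtained, strictly decreasing wherever $\mu'$ does not vanish, its generalized inverse $u^*$ is locally absolutely continuous on $[0,+\infty)$ and $(u^*)'(s)=1/\mu'(u^*(s))$ at Lebesgue points, which upon substitution gives the second claim. The principal technical difficulty is the application of the coarea formula with the singular weight $|\nabla u|^{-1}$; this is resolved by the real-analyticity of $u$ together with Sard's theorem, which ensure that for almost every level $s$ the slice $\{u=s\}$ lies entirely in the regular set up to an $\mathcal H^{n-1}$-null subset, so the integrand is slice-integrable. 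The only genuinely new feature relative to \cite[Lemma~3.2]{3} is the smooth positive factor $(1-|x|^2)^{-n}$ from the invariant volume form, which requires no special treatment.
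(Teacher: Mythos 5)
Your strategy --- real-analyticity of $u=|f|^2\Phi_n^\alpha$, nullity of the critical set, Sard's theorem, and the coarea formula with the weight $2^n(1-|x|^2)^{-n}|\nabla u|^{-1}$ --- is exactly the adaptation of \cite[Lemma~3.2]{3} that the paper has in mind, and the first half of your argument is sound: the superlevel sets are compactly contained in $\mathbb{B}$ by Proposition~\ref{propo1}, so the invariant weight causes no integrability problem, and the representation $\mu(t)=\int_t^{\max u}\bigl(\int_{\{u=s\}}2^n(1-|z|^2)^{-n}|\nabla u|^{-1}\,d\mathcal{H}^{n-1}\bigr)\,ds$ gives the absolute continuity of $\mu$ and the formula for $-\mu'(t)$ at a.e.\ $t$.

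The step that is genuinely under-justified is the passage from $\mu$ to $u^*$. The inverse of a strictly decreasing absolutely continuous function need \emph{not} be absolutely continuous: if $\mu'$ vanished on a set of positive measure, $u^*$ would develop a singular part (Luzin's condition (N) fails on the image of that set), and your phrase ``strictly decreasing wherever $\mu'$ does not vanish'' supplies neither of the two facts actually needed. You need (i) that $\mu$ is strictly decreasing on $(0,\max u]$: since $u$ is continuous, tends to $0$ at $\partial\mathbb{B}$ and attains $\max u$, every level in $(0,\max u)$ is attained, so $\{t_1<u<t_2\}$ is a nonempty open set of positive $\tau$-measure; and (ii) that $-\mu'(t)>0$ for a.e.\ $t$: at a.e.\ (regular) level the set $\{u=t\}$ is a nonempty compact hypersurface, hence of positive $\mathcal{H}^{n-1}$-measure, so the coarea integrand has strictly positive integral. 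With (i) and (ii), injectivity of $\mu$ and the area formula give $|\mu(E)|=\int_E|\mu'|>0$ whenever $|E|>0$, so $u^*=\mu^{-1}$ satisfies Luzin (N) and, being continuous and monotone, is locally absolutely continuous by Banach--Zarecki; only then does $\mu(u^*(s))=s$ yield $(u^*)'(s)=1/\mu'(u^*(s))$ a.e.\ and the stated formula. Adding these two observations (both immediate from what you have already established) closes the gap.
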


Let us then denote the boundary of the superlevel set where $u > u^*(s)$ as
\[
A_s=\partial\{u>u^*(s)\}.
\]
We have then, by Lemma \ref{thm:lemma-derivatives},
\[
I_n'(s)=u^*(s),\quad
I_n''(s)=-2^{-n}\left(\int_{A_s} |\nabla u|^{-1}(1-|x|^2)^{-n}\,d{\mathcal H}^{n-1}(x)\right)^{-1}.
\]

The Cauchy-Schwarz inequality implies
\[
\begin{split}&\left(\int_{A_s} 2^n|\nabla u|^{-1}(1-|x|^2)^{-n}\,d{\mathcal H}^{n-1}(x)\right)
\left(\int_{A_s} 2^{n-2}|\nabla u|(1-|x|^2)^{2-n}\,d{\mathcal H}^{n-1}\right)
\\&\geq
\left(\int_{A_s} 2^{n-1}(1-|x|^2)^{1-n}\,d{\mathcal H}^{n-1}(x)\right)^2.\end{split}
\]
Letting
\[
L(A_s):= 2^{n-1}\int_{A_s} (1-|x|^2)^{1-n}\,d{\mathcal H}^{n-1}
\]
denote the hyperbolic length of $A_s$, we obtain the lower bound
\begin{equation}\label{eq:lower-bound-second-derivative}
I_n''(s)\geq - 2^{n-2}\left(\int_{A_s}|\nabla u|(1-|x|^2)^{2-n} d{\mathcal H}^{n-1}\right)
L(A_s)^{-2}.
\end{equation}

Let $\nu=\nu(x)$ be the outward unit normal to $A_s$ at a point $x$. Note that $\nabla u$ is parallel to $\nu$ but directed in the opposite direction. Thus we have $|\nabla u| = -\left<\nabla u,\nu\right>$. Also, we note that since for $x\in A_s$ we have $u(x) = u^\ast(s)=t$, we obtain for $x\in A_s$ that
$$\frac{|\nabla u(x)|}{t} = \frac{|\nabla u(x)|}{u} =  \left<\nabla  \log u(x), \nu\right>.$$

Now the integral in \eqref{eq:lower-bound-second-derivative} can be evaluated by the Gauss divergence theorem:
\[\begin{split}\int_{A_s} \frac{|\nabla u| dH^{n-1}}{(1-|x|^2)^{n-2}}&=-t  \int_{\Omega_s}\mathrm{div}\left(\frac{\nabla \log u(x)}{(1-|x|^2)^{n-2}}\right) dx
\\&=-t  \int_{\Omega_t}\frac{1}{(1-|x|^2)^n}\Delta_h {\log u(x)} dx.\end{split}\]

Now we plug
$u= g(x)^2 \Phi^\alpha_n(x)$, where $g(x)=|f(x)|$, and calculate $$-t \Delta_h \log(g^2 \Phi^\alpha_n)=-(2 t\Delta_h \log g+t\alpha \Delta_h \log \Phi_n)\le 0+4t \gamma, $$ where
$\gamma=\alpha (n-1)^2.$ Thus $$\int_{A_s} \frac{|\nabla u| dH^{n-1}}{(1-|x|^2)^{n-2}}\le \frac{4t\gamma \tau(\Omega_t)}{2^n}=\frac{4\gamma t s}{2^n}=\frac{4\gamma s u^\ast(s)}{2^n}=2^{2-n}\gamma s I_n'(s).$$

By using \eqref{eq:lower-bound-second-derivative} and previous equation we obtain

\begin{equation}\label{fin}I_n''(s)\geq -\gamma s I_n'(s) L(A_s)^{-2}.
\end{equation}

Let $L(A_s)=P(\Omega_s)$. By \eqref{newper1}, we have
\begin{equation}\label{ypsibaraz} -L(A_s)^{-2} \ge -\frac{\Upsilon(s)}{s}\end{equation}
with  equality in \eqref{ypsibaraz} if and only if $v$ is a constant, because in that case $\Omega_s$ is a ball centered at the origin.
Then \eqref{fin} implies
\begin{equation}\label{fin1}I_n''(s)\geq -\gamma  I_n'(s) {\Upsilon(s)}.\end{equation}
%Observe that for  $n=2$, $$I''(s)\geq -\alpha  I'(s) \frac{1}{4\pi  +s},$$ which coincides with \eqref{Ipp}.
Let  $J(x) = I_n(T(x))$, where $T(x) = \theta^{-1}(x)$, and $\theta (s) = R_n(1,\mathbb{B}_s)$. Then we have $$J'(x)=I_n'(T(x))T'(x), \ \  J''(x) = I_n''(T(x))(T'(x))^2+I_n'(x) T''(x).$$ We now claim that $$J''(x)\ge 0.$$ In view of \eqref{fin1}, we need to show that $$\frac{T''(x)}{\left(T'(x)\right)^2}=\gamma \Upsilon (T(x))$$
where
$\gamma=\alpha(n-1)^2$. The last equation is equivalent to

\begin{equation}\label{needed}\frac{\theta''(s)}{\theta'(s)}=-\gamma \Upsilon (s).\end{equation}

Here,

$$\theta(s)=n c_\alpha \int_{0}^{v(s)}s^{n-1}\Phi_n^\alpha(s)(1-s^2)^{-n} ds$$ and $v(s)$ is the Euclidean radius of the hyperbolic ball $\mathbb{B}_s$ of area $s$.
Observe that $\theta(0)=0$ and, because $v(\infty)= 1$, we obtain that $$\theta(\infty)=n c_\alpha \int_{0}^{1}s^{n-1}\Phi_n^\alpha(s)(1-s^2)^{-n} ds=1.$$
Then $T(0)=0$ and $T(1)=\infty$, and so $J(0)=I_n(0)=0$ and $J(1)=I_n(\infty)=1$. We therefore recall the definition of the weight function $\Phi_n:$
 $$\Phi_n(r)=\exp\left\{\frac{(n-1) (2-n) r^2}{ n} F\left[\begin{array}{c}
                                           1,1,2-\frac{n}{2} \\
                                           2,1+\frac{n}{2}
                                     \end{array}; r^2\right]\right\} \left(1-r^2\right)^{n-1}.$$
Then we have
\begin{equation}\label{beq}\begin{split}\frac{\theta''(s)}{\theta'(s)}&=-\frac{\left(-1+n+(1+n) v(s)^2\right) v'(s)}{v(s) \left(-1+v(s)^2\right)}+\frac{v''(s)}{v'(s)}+ \alpha\frac{\Phi_n'(v(s)) v'(s)}{\Phi_n(v(s))}.\end{split}\end{equation}
We need to gather more information on $v(s).$ Note that

\begin{equation}\label{eq:volume-property}
	\tau(\mathbb{B}(v(s)))=\frac{2^n \pi ^{n/2} F\left[\frac{n}{2},n,\frac{2+n}{2},v(s)^2\right] v(s)^n}{ \Gamma\left[1+\frac{n}{2}\right]}=s,
\end{equation}

and, moreover,

 $$P(\mathbb{B}(v(s)))=\frac{2 n \pi ^{n/2} v(s)^{-1+n} \left(1-v(s)^2\right)^{1-n}}{\Gamma\left[1+\frac{n}{2}\right]}.$$

Since $\Upsilon =\frac{\tau(\mathbb{B}(v(s)))}{P^2(\mathbb{B}(v(s)))}$ by definition, we have
$$\Upsilon(s)=\frac{ \left(1-v(s)^2\right)^{-2+2 n} \Gamma\left[\frac{n}{2}\right] F\left[\frac{n}{2},n,\frac{2+n}{2},v(s)^2\right]}{n 2^{n-1} \pi ^{n/2} v(s)^{n-2}}.$$
By differentiating \eqref{eq:volume-property} and using that the derivative of the function $s \mapsto F[\frac{n}{2}, n, \frac{n+2}{2},s]$ equals
\[
\frac{n((1-s)^{-n} - F[\frac{n}{2}, n, \frac{n+2}{2},s])}{2s},
\]
we obtain that
$$\frac{2^n n \pi ^{n/2} v(s)^{-1+n} \left(1-v(s)^2\right)^{-n} v'(s)}{\Gamma\left[1+\frac{n}{2}\right]}=1, \text{ and hence }$$
\begin{equation}\label{eq:derivative-radius}
	v'(s)=\frac{2^{-n} \pi ^{-n/2} v(s)^{1-n} \left(1-v(s)^2\right)^n \Gamma \left[1+\frac{n}{2}\right]}{n}.
\end{equation}
Differentiating once more then yields
 \begin{equation}\label{claim}-\frac{\left(-1+n+(1+n) v(s)^2\right) v'(s)}{v(s) \left(-1+v(s)^2\right)}+\frac{v''(s)}{v'(s)}=0.\end{equation}
Finally, from the Euler transformation formula
\begin{equation} \label{eq:Euler-formula}
F[a,b,c,x]=(1-x)^{c-a-b}F[c-a, c-b, c,x],
\end{equation}
 applied to $a=1$, $b=2-n/2$, $c=1+n/2$, and the definition of $\Phi_n,$ we have  \[\begin{split}&\frac{\Phi_n'(v(s)) v'(s)}{\Phi_n(v(s))}
\\&=(n-1) v(s) \left(\left(-2+\frac{4}{n}\right) F\left[1,2-\frac{n}{2},\frac{2+n}{2},v(s)^2\right]-\frac{2}{1-v(s)^2}\right) v'(x)
\\ \text{ (by  \eqref{eq:derivative-radius})}  &=\frac{ (n-1) \Gamma\left[1+\frac{n}{2}\right]   \left(1-v(s)^2\right)^n \left(\left(\frac{4}{n}-2\right) F\left[1,2-\frac{n}{2},\frac{2+n}{2},v(s)^2\right]-\frac{2}{1-v(s)^2}\right)}{n2^{n} \pi ^{n/2} v(s)^{n-2}}
%\\&=\frac{\alpha (n-1) \Gamma\left[1+\frac{n}{2}\right]    \left(-2 \left(1-v(s)^2\right)^{n-1}+\left(-2+\frac{4}{n}\right) \left(1-v(s)^2\right)^{-2+2n} F\left[-1+n,\frac{n}{2},1+\frac{n}{2},v(s)^2\right]\right)}{n2^{n} \pi ^{n/2} v(s)^{n-2}}
\\  \text{ (by  \eqref{eq:Euler-formula})} &=\frac{(n-1) \Gamma\left[1+\frac{n}{2}\right]    \left(\left(\frac{2}{n}-1\right) F\left[n-1,\frac{n}{2},1+\frac{n}{2},v(s)^2\right]- \left(1-v(s)^2\right)^{1-n}\right)}{n2^{n-1} \pi ^{n/2}  \left(1-v(s)^2\right)^{2-2n}v(s)^{n-2}}
\\&=\frac{ (n-1) \Gamma\left[1+\frac{n}{2}\right]   \left(1-v(s)^2\right)^{2n-2}  \left(\left(\frac{2}{n}-2\right) F\left[n,\frac{n}{2},1+\frac{n}{2},v(s)^2\right]\right)}{n2^{n-1} \pi ^{n/2} v(s)^{n-2}}.
\end{split}\]
The last equation follows from the identity $$\frac{2 \Gamma[m+n-1]}{(2 m+n) \Gamma[1+m] \Gamma[n-2]}+\frac{2 \Gamma[m+n-1]}{\Gamma[1+m] \Gamma[n-1]}=\frac{4 \Gamma[m+n]}{(2 m+n) \Gamma[1+m] \Gamma[n-1]},$$
and then expanding the Taylor series of $\left(\frac{2}{n}-1\right) F\left[n-1,\frac{n}{2},1+\frac{n}{2},r\right]- \left(1-r\right)^{1-n}.$ Therefore

\begin{equation}\label{merk}
\frac{\Phi_n'(v(s)) v'(s)}{\Phi_n(v(s))}=\frac{(n-1)^2 \Gamma\left[\frac{n}{2}\right]      F\left[n,\frac{n}{2},1+\frac{n}{2},v(s)^2\right]}{n2^{n-1} \pi ^{n/2} \left(1-v(s)^2\right)^{2-2n}v(s)^{n-2}}.
\end{equation}

Now \eqref{beq}, \eqref{claim}, \eqref{merk} imply \eqref{needed}. Finally, since $J(0)=0$ and $J(1)\le 1,$ we have $J(t)\le t$. So \begin{equation}\label{eq:final-ineq}
	 I_n(s)\le \theta(s).
\end{equation}
This proves \eqref{upper} in Theorem \ref{thm:main}. Finally, in order to characterize the equality case, note that, if there is equality at \eqref{eq:final-ineq} at one point $s_0 \in (0,1),$ then it must hold for \emph{all} points. But then \eqref{fin1} becomes an \emph{equality} for all $s \in (0,1),$ and as a consequence, we must have equality in the hyperbolic isoperimetric inequality \eqref{ypsibaraz}. Thus, the set $A_s$ must be the boundary of an euclidean ball.

Moreover, $\Omega = \{ u > u^*(s) \}$ up to a set of measure zero. We then apply a suitable M\"obius transformation $m$ which takes $\{ u > u^*(s) \}$ -- which we know is a ball -- into a ball of same hyperbolic measure centered at the origin. Consider the associated function $g(x) = f(m(x)) \frac{\Phi_n^{\alpha/2}(|m(x)|)}{\Phi_n^{\alpha/2}(|x|)},$ and $v(x) = |g(x)|^2 \Phi_n^{\alpha}(|x|).$ By construction, the set $m^{-1}(\Omega) = \{ v > v^*(s)\}.$ Thus, both $v$ and $\Phi_n$ are constant on the boundary of $m^{-1}(\Omega),$ which implies that $\log |g(x)|$ is  also constant on $\partial m^{-1}(\Omega).$

Finally, in order for \eqref{fin1} to be an equality, we must have $\Delta_h \log |f(x)| = 0$ in $\Omega,$ which implies by M\"obius invariance that $\Delta_h \log |g(x)| = 0$ almost everywhere in $m^{-1}(\Omega).$ As $\log |g(x)|$ is also constant on the boundary of that set, by the maximum principle for $\mathcal{M}-$subharmonic functions we have that $\log |g(x)|$ must be constant in $m^{-1}(\Omega).$ Since this argument can be used for arbitrary $s \in (0,1),$ $f$ must belong to the asserted extremal set in Theorem \ref{thm:main}, and the proof is complete.
 \end{proof}

%$$F\left[1,2-\frac{n}{2},\frac{2+n}{2},v(s)^2\right]=\left(1-p^2\right)^{-2+n} F\left[-1+n,\frac{n}{2},1+\frac{n}{2},p^2\right]$$

%$$X=-\alpha\left(\frac{2^{1-n} \pi ^{-n/2} p^{2-n} \left(1-p^2\right)^{-2+2 n} \Gamma\left[\frac{n}{2}\right] F\left[\frac{n}{2},n,\frac{2+n}{2},p^2\right]}{n} (n-1)^2\right)$$

%$$X=\frac{\alpha (n-1) \Gamma\left[1+\frac{n}{2}\right] \left(-\left(1-p^2\right)^{n-1} +\left(-1+\frac{2}{n}\right)\text{  }F\left[-1+n,\frac{n}{2},1+\frac{n}{2},p^2\right]\right) }{n2^{n-1} p^{n-2} \pi ^{n/2}}.$$

\section{Wavelet transforms and the main Theorem}

An important question, which arises naturally when comparing our Theorem \ref{thm:main} with the main results in \cite{ramostilli}, is whether our results have any relationship to the case of Wavelet transforms in higher dimensions.

More specifically, one may wonder whether the wavelet transform associated with a certain \emph{radial} window $\psi(x) = \psi(|x|),$ given explicitly by
\[
W_{\psi}f(y,t) = C_{\psi} t^{-n/2} \int_{\R^n} f(x) \overline{ \psi\left( \frac{x-y}{t} \right)} \, dx
\]
satisfies that, for some specific weight function $w(t),$ we could have $w(t) W_{\psi}f(-x,t) =: G_f(x,t)$ belong to the admissible monoid as defined in subsection \ref{ssec:admissible}. If this is indeed the case, then, by property \ref{prop:subh} above of the admissible monoid, $|G_f|$ would be $\mathcal{M}-$subharmonic. As subharmonicity is a broad property, one may search for a  slightly more rigid property to impose on $G_f.$ For instance, if $n=1,$ then the paper by the second author and P. Tilli \cite{ramostilli} uses a class of windows such that, for a suitable weight $w,$ the function $G_f$ above is \emph{analytic} in the upper-half plane. 

Since analyticity is a perhaps too strict property, a good intermediate goal seems to be to find all $w, \psi$ such that the functions $G_f$ generated as before are all  $\mathcal{M}-$harmonic in $\mathbb{H},$ the upper-half space model for the hyperbolic space. Thus, summarizing, we are interested in searching for all radial functions $\psi$ and all positive weights $w(t) > 0$ such that $w(t) W_{\psi}f(-x,t)$ is $\mathcal{M}-$harmonic in the upper half-space $\R^{n+1}_+.$ This is the content of the next result, whose proof is inspired by the explicit one-dimensional characterization of analytic wavelets by G. Ascensi and J. Bruna in \cite{AscensiBruna} (see also \cite{Holigetal}).

\begin{proposition}\label{prop:charact-harmonic}
Let $n > 1$ be fixed. Suppose that the admissible radial window $\psi \in L^2(\R^n)$  and the smooth weight $w:\R^n \to \R_+$ are such that, for each $f \in L^2(\R^n),$ the function $(x,t) \mapsto w(t) W_{\psi}f(-x,t)$ is $\mathcal{M}-$harmonic in $\R^{n+1}_+.$ Then $w(t) = t^{-\beta}$ and $\widehat{\overline{\psi}}(r) = r^{\beta} K_{n/2}(r),$ where $\beta>\frac{n}{2}$ and $K_{\nu}$ is the modified Bessel function of the second kind of order $\nu.$
\end{proposition}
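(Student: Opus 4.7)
The plan is to Fourier-transform the $\mathcal{M}$-harmonicity condition on $F(x,t) := w(t) W_\psi f(-x,t)$ in the spatial variable $x$, thereby reducing it to a family of ODEs in $t$ parametrized by $\rho := |\xi|$. A direct change of variables together with Plancherel's identity yields
\[
\hat F(\xi,t) = C_\psi\, w(t)\, t^{n/2}\, \phi(t|\xi|)\, \hat f(-\xi),
\]
where $\phi$ denotes the radial profile of $\widehat{\overline\psi}$. Recalling that on the upper half-space $\R^{n+1}_+$ the hyperbolic Laplacian reads $\Delta_h = t^2 \Delta_x + t^2 \partial_t^2 - (n-1)\, t\, \partial_t$, and since $\hat f$ is arbitrary, the condition $\Delta_h F = 0$ amounts to
\[
t^2\, \partial_t^2 G(\rho,t) - (n-1)\, t\, \partial_t G(\rho,t) - (\rho t)^2\, G(\rho,t) = 0 \qquad \text{for all } (\rho,t) \in (0,\infty)^2,
\]
where $A(t) := w(t)\, t^{n/2}$ and $G(\rho,t) := A(t)\, \phi(\rho t)$.

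The key step is to show that $A$ must be a pure power of $t$. Setting $u := \rho t$ and expanding by the chain rule, the equation above rewrites as
\[
\phi(u)\, \beta(t) + [2 u \phi'(u) - (n-1)\phi(u)]\, \alpha(t) = u^2 \phi(u) + (n-1)\, u\, \phi'(u) - u^2 \phi''(u),
\]
with $\alpha(t) := t A'(t)/A(t)$ and $\beta(t) := t^2 A''(t)/A(t)$. For any two distinct $u_1, u_2 > 0$ this becomes a linear system in $(\alpha(t), \beta(t))$ whose determinant equals $2[u_1 \phi'(u_1)\phi(u_2) - u_2 \phi'(u_2)\phi(u_1)]$. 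That expression vanishes identically in $(u_1,u_2)$ only when $u\phi'(u)/\phi(u)$ is constant, i.e., $\phi$ itself is a pure power---a case excluded by $\widehat{\overline\psi} \in L^2(\R^n)$. Otherwise, choosing $u_1, u_2$ for which the determinant is nonzero and noting that the right-hand side depends only on $u$, one deduces that $\alpha$ and $\beta$ are constant in $t$. Hence $A(t) = t^\gamma$ for some $\gamma \in \R$, equivalently $w(t) = t^{-\beta}$ with $\beta := n/2 - \gamma$.

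Substituting $A(t) = t^\gamma$ back reduces the ODE to
\[
u^2 \phi''(u) + [2\gamma - (n-1)]\, u\, \phi'(u) + [\gamma^2 - n\gamma - u^2]\, \phi(u) = 0,
\]
and the further substitution $\phi(u) = u^\beta\, \Psi(u)$ turns this into the modified Bessel equation of order $n/2$,
\[
u^2 \Psi''(u) + u\, \Psi'(u) - \Bigl(u^2 + \tfrac{n^2}{4}\Bigr) \Psi(u) = 0.
\]
Its linearly independent solutions being $I_{n/2}$ and $K_{n/2}$, the assumption $\overline\psi \in L^2(\R^n)$ forces us to discard the exponentially growing $I_{n/2}$, leaving $\Psi = c\, K_{n/2}$, and hence $\phi(r) = c\, r^\beta\, K_{n/2}(r)$. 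Finally, the admissibility condition $\int_0^\infty |\phi(r)|^2\, r^{-1}\, dr < \infty$, combined with the small-argument asymptotic $K_{n/2}(r) \sim C\, r^{-n/2}$, forces $\beta > n/2$.

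The main technical hurdle is the separation-of-variables step: rigorously concluding that $\alpha$ and $\beta$ must both be constant in $t$ requires some care in excluding the borderline case $\phi \propto u^k$, which is handled via the $L^2$ integrability of $\widehat{\overline\psi}$ over $\R^n$. Everything else reduces to mechanical substitutions leading to a classical modified Bessel equation.
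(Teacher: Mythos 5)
Your proposal is correct and takes essentially the same route as the paper: Plancherel reduction to the pointwise equation for $w(t)t^{n/2}\varphi(t|\xi|)$, a separation-of-variables argument (your $2\times 2$ determinant in $(\alpha(t),\beta(t))$ versus the paper's orthogonality/rank-one argument) forcing the weight to be a pure power while ruling out power-law profiles via $L^2$/admissibility, and then solving the resulting second-order ODE and keeping only the decaying solution, with $\beta>\frac{n}{2}$ from the admissibility integral. The only cosmetic difference is that you rewrite the ODE explicitly as the modified Bessel equation with solutions $I_{n/2},K_{n/2}$, whereas the paper works with $J_{n/2}(-ir)$, $Y_{n/2}(-ir)$ and their asymptotics to isolate $K_{n/2}$.
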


\begin{proof} By the definition of the Wavelet transform and Plancherel's theorem, we may write
\begin{align*}
	w(t) W_{\psi}f(-y,t) & = C_{\psi} w(t) t^{n/2}  \int_{\R^n} \widehat{f}(\xi) e^{-i y\cdot \xi} \widehat{\overline{\psi}} (t\xi) \, d \xi \cr
	& =: C_{\psi} \eta(t) \int_{\R^n} \widehat{f}(\xi) e^{-i y\cdot \xi} \widehat{\overline{\psi}} (t\xi) \, d \xi.
\end{align*}
If this function is $\mathcal{M}-$harmonic for all $f \in L^{2}(\R^n)$, then we must have $\Delta_h (w(t) W_{\psi}f(-x,t)) = 0$ for each $x \in \R^n, t > 0.$ Thus, by density,  we must have the equality as distributions
\[
\Delta_h(\eta(t) e^{-i y \cdot \xi} \varphi(t|\xi|)) = 0,
\]
where we used the shorthand $\varphi = \widehat{\overline{\psi}}.$ On the other hand, since $w$ is smooth, elliptic regularity shows that $\varphi$ must be of class $C^2(\R_+),$ and hence this expression may be taken in the pointwise sense and then further rewritten, taking into consideration the definition of $\Delta_h F(y,t) = t^2 \Delta_y F(y,t) + t^2 \partial_t^2 F(y,t) - (n-1) t \partial_t F(y,t)$ in the upper-half space model. We obtain that
\begin{align*}
	e^{-i y \cdot \xi}  \{ \eta(t) \left( (t|\xi|)^2 \varphi(t|\xi|)  - (t|\xi|)^2 \varphi''(t|\xi|) + (n-1) \varphi'(t|\xi|) (t|\xi|) \right) + \cr
	t\eta'(t) \left( (n-1)\varphi(t|\xi|)  - 2 \varphi'(t|\xi|) (t|\xi|) \right) - t^2 \eta''(t) \varphi(t|\xi|) \} = 0
\end{align*}
holds whenever $t > 0, \xi \in \R^n.$ Relabeling $t|\xi| = r,$ and subsequently $e^u = t,$ with $\gamma(u) = \eta(e^u),$ and assembling similar terms, we obtain
\begin{align*}
	\gamma(u) \left( r^2 (\varphi(r) - \varphi''(r)) + (n-1) \varphi'(r) r \right) + \cr
	\gamma'(u)(n\varphi(r) - 2 \varphi'(r) \cdot r) - \gamma''(u) \varphi(r) = 0, \, \forall u \in \R, \, r > 0.
\end{align*}
Thus, the vectors $(\gamma(u), \gamma'(u), - \gamma''(u))$ and $( r^2 (\varphi(r) - \varphi''(r)) + (n-1) \varphi'(r) r, n\varphi(r) - 2 \varphi'(r) \cdot r, \varphi(r))$ are \emph{always} orthogonal. This implies one of the following:
\begin{enumerate}[(i)]
	\item $\gamma \equiv 0,$ which corresponds to a trivial solution $w \equiv 0;$
	\item $\varphi\equiv 0,$ which again corresponds to a trivial case;
	\item The dimension of the subspace spanned by  $( r^2 (\varphi(r) - \varphi''(r)) + (n-1) \varphi'(r) r, n\varphi(r) - 2 \varphi'(r) \cdot r, \varphi(r)), r > 0,$ is at most one, which implies that there is a constant $c > 0$ such that $\varphi'(r) \cdot r = c \cdot \varphi(r) \iff (\log \varphi)'(r) = \frac{c}{r}, \, r > 0 \iff \varphi(r) = C \cdot r^{c},$ for some $C \in \R.$ As this does \emph{not} yield an admissible wavelet, we may rule out this case;
	\item The dimension of the subspace spanned by $(\gamma(u), \gamma'(u), - \gamma''(u)), \, u \in \R,$ is at most one. This implies that $\gamma(u) = C \cdot e^{\alpha u},$ for some $C, \alpha \in \R.$ In that case, we obtain the following equation for $\varphi:$
	\begin{equation}\label{eq:differential}
		(n\alpha - \alpha^2 + r^2) \varphi(r) + (n-1 - 2\alpha) \varphi'(r) \cdot r - \varphi''(r) \cdot r^2 = 0, \, \forall \, r > 0.
	\end{equation}
	This is a second order linear equation in $\varphi.$ Moreover, it is not difficult to identify two particular solutions to \eqref{eq:differential}: $\varphi_1(r) = r^{\frac{1}{2} (n-2\alpha)} J_{\frac{n}{2}}(-i \cdot r)$ and $\varphi_2(r) = r^{\frac{1}{2} (n-2\alpha)}  Y_{\frac{n}{2}}(-i \cdot r)$, where $J_{\nu}, Y_{\nu}$ denote the Bessel functions of first and second kind, respectively, of order $\nu.$
	
	Therefore, all solutions of \eqref{eq:differential} are linear combinations of $\varphi_1$ and $\varphi_2.$ On the other hand, by using the asymptotics  of the Bessel functions $J_{n/2}, Y_{n/2},$ we obtain that
	\begin{align*}
		J_{\alpha}(-iz) \sim \frac{1}{\sqrt{-2\pi i z}} e^{z} \cdot e^{-i(\alpha \pi/2 + \pi/4)}, \, \, Y_{\alpha}(-iz) \sim \frac{-i}{\sqrt{-2\pi i z}} e^{z} \cdot e^{-i(\alpha \pi/2 + \pi/4)},
	\end{align*}
	and thus the only linear combination of $J_{n/2}(-i \cdot r)$ and $Y_{n/2}(-i\cdot r)$ which is integrable on $\R$ is $J_{n/2}(-i \cdot r) - i Y_{n/2}(- i \cdot r) = c_{n} \cdot K_{n/2}(r),$ where $K_{\alpha}$ denotes the modified Bessel function of the second kind.
\end{enumerate}
Hence, the only admissible radial windosw $\psi$ for which $w(t) W_\psi f(-x,t)$ is $\mathcal{M}-$harmonic on the upper half space, for some smooth, positive weight $w,$ are those such that
\begin{equation}\label{eq:admissible-windows}
	\widehat{\overline{\psi}}(r) = c_{n,\alpha} r^{\frac{1}{2}(n-2\alpha)} K_{n/2}(r).
\end{equation}
Relabeling $\frac{n}{2} - \alpha = \beta,$ and using that $\int_{\R^n} \frac{|\widehat{\psi}(\xi)|^2}{|\xi|^n} \, d\xi < +\infty,$ we get the desired condition on $\beta.$ This finishes the proof.
\end{proof}
In spite of this explicit characterization of $\mathcal{M}-$harmonicity, the next main result of this Section shows that, if we restrict our attention to this class of windows, we are \emph{never} able to apply Theorem \ref{thm:main}. That is, the (incidental) fact that if $n=1,$ then the Cauchy wavelets induce a specific class of $\mathcal{M}-$harmonic functions which belongs to the monoid \emph{cannot} happen for dimensions $n\ge2.$ 

\begin{theorem}\label{thm:no-free-lunc-wavelet}
	Let $\psi : \R^n \to \R$ be a radial window of the form \eqref{eq:admissible-windows}. If $n>1,$ then, for each cone $C \subset \R^n$ containing the origin with nonempty interior, there is a function $f \in L^2(\R^n)$ such that $\widehat{f}$ is compactly supported inside $C$ and $\Delta_h(\log |w \cdot W_\psi f|^2) (y,t) < 0,$ for some $y \in \R^n, \, t >0.$
\end{theorem}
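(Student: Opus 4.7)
The plan is to explicitly construct an $L^2$ function $f$ for which $G_f(y,t) := w(t) W_\psi f(-y,t)$ fails to be log-subharmonic at some interior point. By Proposition~\ref{prop:charact-harmonic}, $G_f$ is $\mathcal{M}$-harmonic on $\R^{n+1}_+$ and satisfies
\[
G_f(y,t) \;=\; c_{n,\beta}\, t^{n/2} \int_{\R^n} \widehat f(\xi)\, e^{-i y \cdot \xi}\, |\xi|^{\beta}\, K_{n/2}(t|\xi|)\, d\xi.
\]
Since $\Delta_h G_f = \Delta_h \overline{G_f} = 0$ (the operator $\Delta_h$ having real coefficients), a cross-term calculation gives $\Delta_h |G_f|^2 = 2 t^2 |\nabla G_f|^2$; applying the chain rule to $\log$ then yields
\[
\Delta_h \log |G_f|^2 \;=\; \frac{2t^2 \bigl( 2 |\mathrm{Im}(\overline{G_f}\,\nabla G_f)|^2 - |G_f|^2 |\nabla G_f|^2 \bigr)}{|G_f|^4}
\]
wherever $G_f \neq 0$. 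The structural feature to exploit is that if $G_f$ is real-valued at a point with $\nabla G_f \neq 0$, then the numerator reduces to $-2 t^2 |G_f|^2 |\nabla G_f|^2 < 0$.  It thus suffices to arrange for $|G_f|^2$ to locally look like the square of a real function with a nearby simple zero.

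To do this, I would use $n \geq 2$ to pick $\xi_1, \xi_2 \in \mathrm{int}(C)$ with $\xi_1 \neq \xi_2$ but $|\xi_1| = |\xi_2| =: R$; such a choice exists because $\mathrm{int}(C)$ is open and intersects the sphere of radius $|\xi_1|$ in a set of positive $(n-1)$-dimensional measure. Writing $\eta := \xi_1 - \xi_2$ and $\zeta := (\xi_1+\xi_2)/2$, we have $\eta \cdot \zeta = 0$ and $|\zeta|^2 + |\eta|^2/4 = R^2$. The idealised input $\widehat f = \delta_{\xi_1} + \delta_{\xi_2}$ produces
\[
G_\infty(y,t) \;=\; c\, t^{n/2} R^{\beta} K_{n/2}(tR)\bigl(e^{-iy\cdot\xi_1} + e^{-iy\cdot\xi_2}\bigr) \;=\; 2 u(y,t) e^{-i y \cdot \zeta},
\]
with real-valued $u(y,t) := c\, t^{n/2} R^{\beta} K_{n/2}(tR) \cos(\eta \cdot y /2)$, so $|G_\infty|^2 = 4 u^2$.

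A short manipulation of the modified Bessel equation shows that $A(t) := t^{n/2} K_{n/2}(tR)$ satisfies $t^2 A''(t) - (n-1) t A'(t) = t^2 R^2 A(t)$; combining this with $\Delta_y \cos(\eta \cdot y / 2) = -\tfrac{|\eta|^2}{4} \cos(\eta\cdot y/2)$ and $R^2 = |\zeta|^2 + |\eta|^2/4$ yields $\Delta_h u = t^2 |\zeta|^2 u$. Hence, wherever $u \neq 0$,
\[
\Delta_h \log |G_\infty|^2 \;=\; \frac{2 \Delta_h u}{u} - \frac{2 t^2 |\nabla u|^2}{u^2} \;=\; 2 t^2 \left( |\zeta|^2 - \frac{|\nabla u|^2}{u^2} \right).
\]
As $u$ vanishes to first order on the hyperplanes $\{\eta \cdot y \in (2\mathbb{Z}+1)\pi\}$, the quotient $|\nabla u|^2/u^2$ tends to $+\infty$ near each such zero while $|\zeta|^2$ stays bounded. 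I can therefore fix a point $(y_0, t_0)$ with $u(y_0, t_0) \neq 0$ but $\Delta_h \log |G_\infty|^2 (y_0, t_0) < -1$.

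The last step is to regularise: take $\widehat{f_\epsilon}(\xi) := \rho_\epsilon(\xi - \xi_1) + \rho_\epsilon(\xi - \xi_2)$ for a smooth nonnegative bump $\rho_\epsilon$ supported in $B(0,\epsilon)$ of unit mass. For $\epsilon$ small enough both $B(\xi_i,\epsilon) \subset \mathrm{int}(C)$, so $f_\epsilon \in \mathcal{S}(\R^n)$ with $\widehat{f_\epsilon}$ compactly supported in $C$. Because the kernel $(\xi, y, t) \mapsto e^{-iy\cdot\xi} |\xi|^\beta K_{n/2}(t|\xi|)$ is smooth in all its variables, $G_{f_\epsilon}$ together with its first two derivatives converges locally uniformly to those of $G_\infty$ as $\epsilon \to 0$, and consequently $\Delta_h \log |G_{f_\epsilon}|^2 (y_0, t_0) < 0$ for all sufficiently small $\epsilon$. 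I expect the main technical obstacle to be precisely this continuity step, which relies on $|G_\infty(y_0,t_0)|$ being bounded away from zero so that the small $L^\infty$-perturbation of $G_\infty$ by $G_{f_\epsilon}$ does not destabilise the ratios defining $\Delta_h \log|G_{f_\epsilon}|^2$ near $(y_0,t_0)$.
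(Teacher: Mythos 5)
Your proof is correct, but it takes a genuinely different route from the paper's. The paper argues by contradiction and tests against regularized point masses at two \emph{collinear} frequencies $e_1$ and $2e_1$, so that the limiting object is $G(y_1,t)-G(2y_1,2t)$ with $G(y_1,t)=t^{n/2}K_{n/2}(t)e^{-iy_1}$; negativity of $\Delta_h\log|\cdot|^2$ is then extracted in the boundary regime $t\to 0$, using the small-argument expansion of $K_{n/2}$ (with a separate, more delicate treatment of $n=2$, where the second derivative of $t^{n/2}K_{n/2}(t)$ blows up at $0$) and choosing $y_1$ near $0$. You instead place the two regularized point masses at frequencies of \emph{equal modulus} $R$ but different directions inside the cone --- precisely the step that requires $n\ge 2$ and so isolates the dimensional obstruction --- so that the limit factors as $2u(y,t)e^{-iy\cdot\zeta}$ with $u$ real; the phase drops out of $\log|G_\infty|^2$, the modified Bessel equation gives the exact identity $\Delta_h u=t^2|\zeta|^2 u$ (your computation for $A(t)=t^{n/2}K_{n/2}(tR)$ checks out), and negativity follows at any fixed $t_0>0$ by letting $y$ approach a nodal hyperplane of $\cos(\eta\cdot y/2)$, where $|\nabla u|^2/u^2$ blows up while $|\zeta|^2$ stays bounded. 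This buys you a proof with no $t\to0$ asymptotics and no $n=2$ versus $n>2$ case distinction; the final regularization and continuity-of-second-derivatives step is essentially the same in both arguments and is unproblematic since the Fourier supports stay away from $\xi=0$ and $|G_\infty(y_0,t_0)|>0$. One small caveat: your motivating remark that the numerator reduces to $-2t^2|G_f|^2|\nabla G_f|^2$ whenever $G_f$ is real \emph{at a point} with nonvanishing gradient is not accurate as stated (one needs $\mathrm{Im}(\overline{G_f}\,\nabla G_f)=0$ there, e.g.\ $G_f$ real in a neighborhood), but your actual construction never relies on it, since you compute $\Delta_h\log(4u^2)$ directly from the real function $u$.
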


\begin{proof} The basic setup of the proof is an argument by contradiction. Suppose that the property holds for each cone $C$ and each $f \in L^2(\R^n)$ which has compact support in $C$. We may suppose, up to a rotation, that $C$ contains the line $\{(t,0,\dots,0), t \in \R\}.$ Take thus a sequence of smooth functions $f_{\varepsilon} \in \mathcal{S}(\R^n)$ such that $\widehat{f_{\varepsilon}}$ is supported inside $C$ and converges as a distribution to $\delta_{e_1} - 2^{\alpha} \delta_{2e_1},$ with $e_i$ denoting the vector whose $j-$th coordinate is $\delta_{i,j}$. Then, since the function $r \mapsto r^{n/2} K_{n/2}(r)$ is smooth on $\R_+,$ and the support of $\widehat{f_{\varepsilon}}$ is disjoint from the origin, we see that
	\begin{align*}
		w(t) W_{\psi} f_{\varepsilon}(-y,t) & = C_{\psi} t^{-\beta} \int_{\R^n} \widehat{f_{\varepsilon}}(\xi) e^{-i y\cdot \xi} \widehat{\overline{\psi}} (t|\xi|) \, d \xi \cr
		& = C_{n,\alpha} \int_{\R^n} \widehat{f_{\varepsilon}}(\xi) |\xi|^{\beta - \frac{n}{2}} e^{-i y \cdot \xi} (t|\xi|)^{\frac{n}{2}} K_{n/2}(t|\xi|) \, d\xi \cr
		& \to C_{n,\alpha} t^{\frac{n}{2}} \left[ K_{n/2}(t) e^{-i y_1} - c'_{n,\alpha} 2^{\frac{1}{2}(n-2\alpha)} K_{n/2}(2t) e^{- 2i y_1} \right], \text{ as } \varepsilon \to 0,
	\end{align*}
	where the convergence stated above holds also pointwise for derivatives of order $\le 2$. Hence, if $\Delta_h (\log |\eta \cdot W_\psi f_{\varepsilon}|^2) \ge 0$ almost everywhere, we readily conclude that
	\[
	\Delta_h [|G(y_1,t)-G(2y_1,2t)|^2] \ge 0, \, \forall \, y_1 \in \R, \, \forall \, t>0,
	\]
	where we used the shorthand $G(y_1,t) := P_n(t) e^{-iy_1}$  where $P_n(t) = t^{n/2} K_{n/2}(t).$ We first note that, for a function $u$ from the upper half-space $\R^{n+1}_+$ to $\R,$ we may compute
	\[
	\Delta_h (\log u) = \frac{t^2}{u^2} \left( u \frac{\Delta_h u}{t^2}  -  (|\nabla_y u|^2 + |\partial_t u|^2)  \right).
	\]
	This last expression is non-negative if, and only if,
	\begin{equation}\label{eq:equiv-positive-laplacian}
		u \frac{\Delta_h u}{t^2}  -  (|\nabla_y u|^2 + |\partial_t u|^2) \ge 0 \text{ almost everywhere in } \R^{n+1}_+.
	\end{equation}
	We wish to use this with $t \to 0$ for $u(y_1,t) = |G(y_1,t) - G(2y_1,2t)|^2.$ To that extent, notice that
	\[
	u(y_1,t) = |P_n(t)|^2 + |P_n(2t)|^2 - 2 P_n(t) P_n(2t) \cos(y_1).
	\]
	Then immediately, we get that $|\nabla_y u| = 2| P_n(t) P_n(2t) \sin(y_1)| \to 2 P_n(0)^2 |\sin(y_1)|$ when $t \to 0,$ as well as $\partial_t u = 2P_n(t) P_n'(t) + 4 P_n(2t) P_n'(2t) - 2 (P_n'(t) P_n(2t) + 2 P_n(t) P_n'(2t)) \cos(y_1) \to 0$ as $t \to 0,$ as the function $P_n$ has a critical point at the origin for $n > 1$ (this follows, for instance, from the Taylor expansion of $K_{n/2}$ around the origin; see \cite[Chapters~9~and~10]{AbrStegun}).
	
	We further see that $u(y_1,t) \to 2 P_n(0)^2(1-\cos(y_1))$ as $t \to 0.$ We now compute the hyperbolic laplacian part: as
	\[
	\frac{\Delta_h u}{t^2} = \Delta_y u + \partial_t^2 u - (n-1)\frac{\partial_t u}{t},
	\]
	we analyze the behaviour of the three terms above separately as $t \to 0.$ For the first, we see easily that
	\[
	\Delta_y u = 2P_n(t)P_n(2t) \cos(y_1) \to 2 P_n(0)^2 \cos(y_1) \text{ as } t \to 0.
	\]
	Moreover,
	\begin{align*}
		\partial_t^2 u & = 2(P_n'(t))^2 + 2 P_n(t) P_n''(t) + 8 (P_n'(2t))^2 + 8 P_n(2t) P_n''(2t) \cr
		& - 2 (2P_n'(t) P_n'(2t) + P_n''(t)P_n(2t) + 2 P_n'(t) P_n'(2t) + 4 P_n(t) P_n''(2t)) \cos(y_1)  \cr
		& \to 10 P_n(0) P_n''(0) - 10 P_n(0) P_n''(0) \cos(y_1) = 10 P_n(0)P_n''(0) (1-\cos(y_1))
	\end{align*}
	as $ t \to 0.$ Finally, note that, since $\partial_t u(y_1,t) \to 0$ as $t \to 0,$ we have $\partial_t u /t \to \partial_t^2 u$ as $t \to 0.$ Hence,
	\[
	\frac{\Delta_h u}{t^2} = 2 P_n(0)^2 \cos(y_1) - 10 (n-2) P_n(0) P_n''(0)(1-\cos(y_1)).
	\]
	We must now distinguish between two different cases: if $n>2,$ the computation above holds in a rigorous level, and no further explaining is needed. If, however, $n=2,$ then we have $P_2''(t) \to - \infty$ as $t \to 0.$ In that case, notice that, from the expansion of $K_1$ around $0$ (cf. \cite{AbrStegun}), there is $a \in \R$ with
	\begin{align*}
	\frac{d^2}{dt^2} \left( P_2(t) - \frac{t^2 \log(t/2)}{2} \right) & \to a \text{ as } t \to 0, \cr
	\frac{1}{t} \cdot \frac{d}{dt} \left( P_2(t) - \frac{t^2 \log(t/2)}{2} \right) & \to a \text{ as } t \to 0. \cr
	\end{align*}
	Hence,
	\[
	P_2''(t) - \frac{1}{t} P_2'(t) \to 1 \text{ as } t \to 0,
 	\]
 	and thus
 	\begin{align*}
 	\partial_t^2 u - \frac{\partial_t u}{t} & = 2(P_2'(t))^2 + 2 P_2(t) P_2''(t) + 8 (P_2'(2t))^2 + 8 P_2(2t) P_2''(2t) \cr
 	& - 2 (2P_2'(t) P_2'(2t) + P_2''(t)P_2(2t) + 2 P_2'(t) P_2'(2t) + 4 P_2(t) P_2''(2t)) \cos(y_1) \cr
 	& -  \frac{1}{t} \left(2P_2(t) P_2'(t) + 4 P_2(2t) P_2'(2t) - 2 (P_2'(t) P_2(2t) + 2 P_2(t) P_2'(2t)) \cos(y_1)  \right) \cr
 	& \to 10P_2(0) (1-\cos(y_1)) \text{ as } t \to 0.
 	\end{align*}
	Plugging these observations into \eqref{eq:equiv-positive-laplacian}, we must have
	\begin{align*}
		2 P_n(0)^2(1-\cos(y_1)) & \left( 2P_n(0)^2 \cos(y_1) - 10(n-2)P_n(0)P_n''(0)(1-\cos(y_1))\right) \cr
		& - 4 P_n(0)^4 (1-\cos^2(y_1)) \ge 0, \text{ for each } y_1 \in \R.
	\end{align*}
    Here, when $n=2,$ $(n-2)P_n''(0)$ is to be interpreted as $-1.$ But this is equivalent to
	\begin{align*}
		2P_n(0)^2 (1-\cos(y_1)) \left(10(n-2)P_n(0) P_n''(0)(1-\cos(y_1)) + 2P_n(0)^2\right) \le 0, \, \text{ for each } y_1 \in \R.
	\end{align*}
	Finally, note that, although $P_n''(0) < 0,$ by choosing $y_1$ sufficiently close to $0,$ we have $10(n-2)P_n(0) P_n''(0)(1-\cos(y_1)) + 2P_n(0)^2 > 0,$ and since $2P_n(0)^2 (1-\cos(y_1))  > 0$ holds for each $y_1 \in \R,$ we reach our desired contradiction.
\end{proof}

As mentioned above, the previous result shows a fundamental difference between dimension $n=1$ and higher dimensions $n\ge 2$. As a matter of fact, by redoing the previous computations for $n=1,$ one obtains the so-called \emph{Cauchy wavelets} ``disguised'' as $t^{\beta} K_{1/2}(t),$ with $\beta > \frac{1}{2},$ and, since the hyperbolic laplacian coincides with the Euclidean one in two dimensions, the sub-harmonicity property is reduced to the sub-harmonicity of the modulus of an analytic function, which is known to hold true by elementary complex analysis, as long as one works within the class of functions which have Fourier support in the positive half line. 

In higher dimensions, however, Theorem \ref{thm:no-free-lunc-wavelet} shows that, even if we restrict the Fourier support of the functions involved to \emph{cones}, in opposition to half spaces, one cannot obtain simultaneously $\mathcal{M}-$harmonicity and $\log-\mathcal{M}-$subharmonicity. 

Thus, Theorem \ref{thm:no-free-lunc-wavelet} shows that, in a way, $\mathcal{M}-$harmonicity and log-subharmonicity are \emph{disjoint} properties for higher-dimensional wavelet transforms, and thus the approach in Theorem \ref{thm:main} fails to give us anything for this particular case. For that reason, we are not able to provide an explicit characterization of extremal sets for concentration of wavelet transforms in higher dimensions, even in the cases given by Proposition \ref{prop:charact-harmonic}. We believe this to be an interesting problem, which remains, to our best knowledge, open.

\section*{Acknowledgements}

J.P.G.R. acknowledges financial support from the ERC grant RSPDE 721675. The authors would like to thank Paolo Tilli for discussions at the early stages of the manuscript.

\end{document}